\documentclass[11pt]{amsart}
\usepackage{amsmath,amssymb,amsthm}
\usepackage[all]{xy}
\usepackage{enumerate}
\usepackage{verbatim}

\theoremstyle{plain}
\newtheorem{theorem}{Theorem}
\newtheorem{proposition}[theorem]{Proposition}

\newtheorem{corollary}[theorem]{Corollary}

\theoremstyle{definition}

\newtheorem{remark}[theorem]{Remark}
\newtheorem{example}[theorem]{Example}

\newcommand{\nc}{\newcommand}
\newcommand{\DMO}{\DeclareMathOperator}

\nc{\Nn}{\mathbb{N}} \nc{\Rr}{\mathbb{R}} \nc{\Zz}{\mathbb{Z}}
\nc{\Qq}{\mathbb{Q}}

\nc{\floor}[1]{\lfloor #1 \rfloor} \nc{\ceil}[1]{\lceil #1 \rceil}
\nc{\norm}[1]{\left\| #1 \right\|}
\nc{\ol}{\overline}
\nc{\ds}{\displaystyle}

\nc{\sgn}{\mathrm{sgn}}

\nc{\fprod}{\Omega_1 \otimes_{\Omega_0} \Omega_2}

\nc{\sfprod}{\Omega_1 \times_{\Omega_0} \Omega_2}

\nc{\gv}[1]{\left\langle #1 \right\rangle}
\DMO{\pr}{pr}
\DMO{\E}{E}
\DMO{\cov}{cov}
\DMO{\var}{var}
\DMO{\sd}{SD}
\DMO{\at}{at}
\DMO{\Tr}{Tr}
\DMO{\Hom}{Hom}
\DMO{\ev}{ev}
\DMO{\coev}{coev}
\DMO{\id}{id}

\nc{\cA}{\mathcal{A}}
\nc{\cB}{\mathcal{B}}
\nc{\cP}{\mathcal{P}}
\nc{\atA}{\Omega_{\cA}}
\nc{\atB}{\Omega_{\cB}}
\nc{\atX}{\Omega_{\cX}}

\title[Bundles of Prob.\ Schemes]{Bundles of Probability Schemes}
\author{Wai Yan Pong}
\address{Department of Mathematics\\ California State University Dominguez
Hills, Carson, CA 90747, USA}
\date{June 21, 2026}
\keywords{probability schemes, bundles, conditional expectation, orthogonality,
fiber products, conditional independence, Markov chains}
\subjclass[2020]{Primary 60A05; Secondary 60C05, 18A30, 60J10, 62J05}
\email{wpong@csudh.edu}

\begin{document}

\begin{abstract}
  We study finite probability through a category whose objects are finite
  probability schemes and whose morphisms are probability-preserving maps,
  called \emph{bundles}.  A bundle simultaneously records a quotient of a
  sample space, the corresponding algebra of random variables, and the
  conditional schemes on its fibers.  The pullback, transfer, and fiberwise
  averaging maps associated with a bundle give a functorial construction of
  conditional expectation and its projection properties.  This recovers the
  tower property, total expectation, total covariance and variance, components
  of variance, and a relative weak law of large numbers.  Fiber products of
  bundles encode conditional independence, and iterated fiber products describe
  finite Markov chains from their adjacent-pair distribution schemes.  Finally,
  the transfer map is identified with a relative trace, so the projection
  formula and base-change identity become instances of Frobenius reciprocity
  and Beck--Chevalley condition.
\end{abstract}

\maketitle

Many constructions in probability are naturally performed relative to a chosen
algebra of random variables.  Nelson emphasized this point in his
book~\cite{nelson}, writing that
\begin{quote}
  every construct or theorem of probability theory can be relativized to
  any algebra $\cA$ of random variables.
\end{quote}
He also described the underlying picture as one probability space fibered over
another.  This article develops that picture for finite probability spaces
with strictly positive distributions.  A quotient of such a space is not only a
partition: the quotient set carries an induced distribution, and the quotient
map preserves probabilities.  We call these probability-preserving maps of
finite schemes \emph{bundles}.

The point of keeping the map, rather than only the partition or the algebra of
functions constant on its fibers, is that the standard operations become
functorial.  A bundle
\[
  \pi\colon \Omega\longrightarrow \Omega'
\]
has a pullback $\pi^{\sharp}$, a transfer $\pi_*$, and a fiberwise averaging
map $\pi_{\flat}$.  The operator $\pi^{\sharp}\pi_{\flat}$ is conditional
expectation onto the algebra of functions constant on the fibers, while
$\pi_{\flat}$ is also the observable-side action of the Bayesian inverse
Markov kernel of $\pi$.  The adjunction with $\pi^{\sharp}$ and the
orthogonal-projection property follow from elementary identities for these
maps.  In this way, bundles package together three familiar descriptions of
the same finite data: quotients of a sample space, finite algebras of random
variables, and partitions.

The category also has a useful fiber product.  Over a common base, the fiber
product takes independent products inside each fiber; ordinary independence is
the special case over the terminal scheme.  This construction is not a
categorical product in the category of bundles, but it is symmetric and
associative in each slice.  Probabilistically, the fiber product is exactly
conditional independence: the distribution scheme of $(X,Y,Z)$ maps
isomorphically to $[X,Z]\otimes_{[Z]}[Y,Z]$ precisely when $X$ and $Y$ are
conditionally independent given $Z$.  Iterating the construction then expresses
finite Markov chains as fiber products of their adjacent-pair distribution
schemes, separating local adjacent-pair data from the rule that glues them into
a full chain.  It also immediately explains why reversing a finite Markov chain
gives another Markov chain.

The appendix reinterprets the transfer map as a relative trace.  Under the
assignment $\Omega\mapsto \Rr^\Omega$, pullback is restriction of scalars and
transfer is fiberwise trace.  From this point of view, the projection formula is
Frobenius reciprocity and the base-change identity for fiber products is the
Beck--Chevalley, or Mackey, condition.  Thus the elementary finite-probability
calculations in the main text fit the formal pattern of a finite commutative
Green functor.

This perspective is close in spirit to categorical treatments of conditioning:
for example, the triangle-fill-in formulation of Furber and Jacobs in the
Kleisli category of the distribution monad~\cite{furber-jacobs-condprob} and
the conditional-expectation functor of Adachi and Ryu~\cite{adachi-ryu-prob}.
The point here is narrower and more concrete: finite probability-preserving
maps already organize conditional expectation, orthogonality, fiber products,
conditional independence, Markov chains, and transfer.

\section{Fiber Bundles}

In this article, a (\emph{probability}) \emph{scheme} is a finite set $\Omega$
equipped with a (\emph{probability}) \emph{distribution}, that is, a function
\[
  \pr \colon \Omega \to (0, \infty), \qquad
  \sum_{\omega \in \Omega} \pr(\omega) = 1.
\]
A \emph{random function} is a function defined on a scheme, and a \emph{random
variable} is a real-valued random function.  The set of random variables
on a scheme $\Omega$, denoted $\Rr^{\Omega}$, forms a real algebra under
pointwise addition and multiplication, where constant random variables are
identified with real numbers.  Throughout this article, an $\Rr$-subalgebra
means a unital $\Rr$-subalgebra.

The \emph{expectation}, \emph{expected value}, or \emph{mean} of a random
variable $X$ on $\gv{\Omega, \pr}$ is defined as
\[
  \E_{\pr}X = \E{X} := \sum_{\omega \in \Omega} X(\omega)\pr(\omega).
\]
One readily verifies that $\E$ is a linear functional on
$\Rr^{\Omega}$.  Moreover, since distributions are strictly positive,
the pairing $\gv{X,Y}_{\Omega}:= \E(XY)$ defines an inner product
on $\Rr^{\Omega}$.  Subsets of a scheme are called \emph{events}.  The
\emph{indicator function} of an event $A \subseteq \Omega$ is the random
variable $1_A$ taking the value $1$ on $A$ and $0$ on $\Omega \setminus
A$.  The \emph{probability} of an event $A$, denoted by $\Pr(A)$, is simply
the expected value of its indicator function.

A (\emph{fiber}) \emph{bundle} is a map $\pi$ between the underlying sets of
two schemes that preserves the probability of events; that is, the probability
of the preimage of any event under $\pi$ equals the probability of the event
itself.  Since distributions are strictly positive, any such map must be
surjective.  Schemes with bundles as morphisms form a category under function
composition with the one-point scheme $\bullet$ as the terminal object.  For
a bundle $\pi \colon \gv{\Omega,\pr} \to \gv{\Omega',\pr'}$, we refer to
$\gv{\Omega,\pr}$ as the \emph{total scheme} and $\gv{\Omega',\pr'}$ as the
\emph{base scheme}.  For each $\omega' \in \Omega'$, the \emph{fiber scheme}
of $\pi$ over $\omega'$, denoted by $\pi_{\omega'}$, is the scheme on the
fiber $\pi^{-1}(\omega')$ equipped with the distribution $\pr/\pr'(\omega')$.

\begin{example}
  The \emph{uniform scheme} on a finite non-empty set $T$, denoted by
  $\Omega T$, consists of the set $T$ equipped with the uniform distribution
  $\pr(\omega) = 1/|T|$.  We write $\Omega[n]$ for the uniform scheme on
  $\{1,\ldots, n\}$.\label{ex:uni-sch}
\end{example}

\begin{example}
  The (\emph{distribution}) \emph{scheme} of a random function $X$ on
  $\gv{\Omega,\pr}$ is the scheme $[X] := \gv{X(\Omega), \pr_X}$, where
  $\pr_X(x) := \Pr(X=x)$ is the probability mass function of $X$ with its
  codomain restricted to $X(\Omega)$.  This makes the restriction of $X$ to its
  range $\Omega \to [X]$ a bundle.  As an illustration, consider the uniform
  scheme $\Omega[6]$ and the events $A=\{1,2,3\}$ and $B=\{1,3,5\}$.  The
  schemes $[1_A]$ and $[1_B]$ are both the uniform scheme $\Omega\{0,1\}$
  while $[\gv{1_A, 1_B}]$ is the scheme on $\{0,1\}^2$ with the following
  distribution:
\begin{center}
      \begin{tabular}{c|cccc}
        $\omega$      & $\gv{0,0}$ & $\gv{0,1}$ & $\gv{1,0}$ &
        $\gv{1,1}$ \\ \hline
        $\pr'(\omega)$ & 1/3 & 1/6 & 1/6 & 1/3
   \end{tabular}
 \end{center}
  \label{ex:distr-sch}
\end{example}

\subsection{Three Functors}
\label{sec:three_functors}

A bundle $\pi \colon \gv{\Omega,\pr} \to \gv{\Omega',\pr'}$ identifies
points within a fiber with a single point in the base scheme.  Thus, for
random variables, $\pi$ induces an $\Rr$-algebra map $\pi^{\sharp}(Z):=Z\circ
\pi$ that identifies $\Rr^{\Omega'}$ with the subalgebra of $\Rr^{\Omega}$
consisting of random variables that are constant on the fibers of $\pi$:
 \begin{align*}
   \pi^{\sharp}(\Rr^{\Omega'}) &= \pi^{\sharp}(\Rr[1_{\omega'} \colon \omega'
   \in \Omega']) = \Rr[1_A \colon A\ \text{is a fiber of}\ \pi].
\end{align*}
 We call $\cA_{\pi}:=\pi^{\sharp}(\Rr^{\Omega'})$ the \emph{associated algebra}
 of $\pi$.  For $A,A'$ fibers of $\pi$, note that $1_A^2 = 1_A$ and $1_A1_{A'}
 = 0$ whenever $A \neq A'$.  Consequently, these $1_A$'s form an orthogonal
 basis of their linear span, which is precisely the algebra $\cA_{\pi}$.  In
 particular, the functions $1_{\omega}$ ($\omega \in \Omega$) form a basis
 of $\Rr^{\Omega}$, and hence the assignment
 $1_\omega \mapsto 1_{\pi(\omega)}$ induces a unique linear map
 $\pi_*$, which we call the \emph{transfer map}, from $\Rr^{\Omega}$
 to $\Rr^{\Omega'}$.  Explicitly,
 \[
   (\pi_*X)(\omega')=\sum_{\omega\in\pi^{-1}(\omega')}X(\omega).
 \]
 Regard $\Rr^{\Omega}$ as an $\Rr^{\Omega'}$-module through $\pi^{\sharp}$,
 so that $Z\in\Rr^{\Omega'}$ acts on $X\in\Rr^{\Omega}$ by $(\pi^{\sharp}Z)X$.
 Then a direct computation verifies that $\pi_*$ is not only a $\Rr$-linear
 map but an $\Rr^{\Omega'}$-module map:
 \begin{equation}
   \pi_*((\pi^{\sharp}Z)X)=Z\,\pi_*X.
   \label{eq:proj_formula}
 \end{equation}

For a scheme $\gv{\Omega,\pr}$, let $\pi_{\Omega}^{\bullet}$ denote the
unique bundle $\Omega \to \bullet$.  The expectation on $\gv{\Omega,\pr}$
is the linear map
\[
  \E = \pi_{\Omega\flat}^{\bullet}
    := \pi_{\Omega*}^{\bullet} m_{\pr}
    \colon \Rr^{\Omega}\to \Rr^{\bullet}=\Rr,
\]
obtained by applying $\pi_{\Omega *}^{\bullet}$ after multiplication by
$\pr$.  For a general bundle $\pi \colon \gv{\Omega, \pr} \to \gv{\Omega',
\pr'}$, compatibility with the factorization $\pi_{\Omega}^{\bullet} =
\pi_{\Omega'}^{\bullet} \pi$ dictates the definition of $\pi_{\flat}$:
\begin{align*}
  \pi_{\Omega \flat}^{\bullet} &= \pi_{\Omega *}^{\bullet}m_{\pr}
   = \pi_{\Omega' *}^{\bullet}\pi_{*}m_{\pr}  \\ &=
   \pi_{\Omega'*}^{\bullet}m_{\pr'}\bigl(m_{1/\pr'}\pi_{*}m_{\pr}\bigr)=
  \pi_{\Omega'\flat}^{\bullet}
     \bigl(m_{1/\pr'}\pi_{*}m_{\pr}\bigr).
\end{align*}
Thus, $\pi_{\flat}=m_{1/\pr'}\pi_{*}m_{\pr}$ and so
\[
  \pi_{\flat}X=\frac{\pi_*(X\pr)}{\pr'}=\frac{\pi_*(X\pr)}{\pi_*\pr}.
\]
In other words, $\pi_{\flat}X$ is the random variable on $\Omega'$ whose value
at $\omega'\in\Omega'$ is $\E(X|\pi^{-1}(\omega'))$, the expectation of $X$
on the fiber scheme over $\omega'$.

It is straightforward to verify that $\pi \mapsto \pi^{\sharp}$ is a
contravariant functor while $\pi \mapsto \pi_*$ and $\pi \mapsto \pi_{\flat}$
are covariant functors from the category of bundles to the category of
finite-dimensional real vector spaces.  We summarize their essential properties
in the following theorem.
 \begin{theorem}
  For any bundle $\pi \colon \gv{\Omega,\pr} \to \gv{\Omega',\pr'}$,
  \begin{enumerate}[i.]
    \item $\pi_{\flat}\pi^{\sharp}$ is the identity map on $\Rr^{\Omega'}$.
      \label{i:section}
    \item Both $\pi^{\sharp}$ and $\pi_{\flat}$ preserve expectation.
      \label{i:preserve-E}
    \item $\pi_{\flat}$ is an $\Rr^{\Omega'}$-module map. \label{i:proj_flat}
    \item The operators $\pi^{\sharp}$ and $\pi_{\flat}$ form an adjoint pair.
      \label{i:adjoint}
    \item $\E_{\pi}:=\pi^{\sharp}\pi_{\flat}$ is the orthogonal projection of
      $\Rr^{\Omega}$ to $\cA_{\pi}$. \label{i:orth_proj}
    \item $\pi^{\sharp}$ is an isometry. \label{i:isometry}
  \end{enumerate}
  \label{th:flat-sharp}
\end{theorem}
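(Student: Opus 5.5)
The plan is to derive everything from two elementary facts already recorded above: the formula $\pi_{\flat}=m_{1/\pr'}\pi_{*}m_{\pr}$ together with $\pi_*\pr=\pr'$ (which is exactly the statement that $\pi$ preserves probabilities of the singleton events $\{\omega'\}$), and the projection formula \eqref{eq:proj_formula}. With these in hand, items \ref{i:section}--\ref{i:isometry} should follow in order, each using the previous ones.

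For \ref{i:proj_flat}, I would compute $\pi_{\flat}((\pi^{\sharp}Z)X)=\pi_*((\pi^{\sharp}Z)X\pr)/\pr'$ and apply \eqref{eq:proj_formula} to pull $Z$ out, giving $Z\,\pi_{\flat}X$. Item \ref{i:section} is then the special case $X=1$, since $\pi_{\flat}1=\pi_*\pr/\pr'=1$. For \ref{i:preserve-E}, functoriality of $\flat$ under $\pi^{\bullet}_{\Omega}=\pi^{\bullet}_{\Omega'}\pi$ gives $\E_{\pr'}\pi_{\flat}=\E_{\pr}$; this is precisely how $\pi_{\flat}$ was defined. For $\pi^{\sharp}$, I would combine this with \ref{i:section}: $\E_{\pr}\pi^{\sharp}Z=\E_{\pr'}\pi_{\flat}\pi^{\sharp}Z=\E_{\pr'}Z$.

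For the adjunction \ref{i:adjoint}, the goal is $\gv{\pi^{\sharp}Z,X}_{\Omega}=\gv{Z,\pi_{\flat}X}_{\Omega'}$. I would write the left side as $\E_{\pr}((\pi^{\sharp}Z)X)$, then use \ref{i:preserve-E} to rewrite it as $\E_{\pr'}\pi_{\flat}((\pi^{\sharp}Z)X)$, and finally apply \ref{i:proj_flat} to reach $\E_{\pr'}(Z\,\pi_{\flat}X)$. The isometry \ref{i:isometry} follows by taking $X=\pi^{\sharp}Z'$ in the adjunction and using \ref{i:section}: $\gv{\pi^{\sharp}Z,\pi^{\sharp}Z'}=\gv{Z,Z'}$.

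The step needing the most care is \ref{i:orth_proj}. Setting $P=\pi^{\sharp}\pi_{\flat}$, I would check three things. First, $P$ is idempotent, since $P^2=\pi^{\sharp}(\pi_{\flat}\pi^{\sharp})\pi_{\flat}=P$ by \ref{i:section}. Second, $P$ is self-adjoint: applying \ref{i:adjoint} twice gives $\gv{PX,Y}=\gv{\pi_{\flat}X,\pi_{\flat}Y}=\gv{X,PY}$. Third, its image is $\cA_{\pi}$: the image lies in $\pi^{\sharp}(\Rr^{\Omega'})=\cA_{\pi}$, and $P$ fixes every $\pi^{\sharp}Z$ by \ref{i:section}, so the image is all of $\cA_{\pi}$. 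A self-adjoint idempotent is the orthogonal projection onto its image. Alternatively, one can verify directly that $X-PX\perp\pi^{\sharp}Z$, using the adjunction together with $\pi_{\flat}(X-PX)=0$.
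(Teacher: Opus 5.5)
Your proposal is correct and follows essentially the same route as the paper. You use the same two ingredients, $\pi_*\pr=\pr'$ and the projection formula, to get (iii), (ii) and (iv) exactly as the paper does, and your ``alternative'' argument for (v) is precisely the paper's proof; the remaining differences are cosmetic and valid: you obtain (i) as the case $X=1$ of (iii), you derive (vi) from the adjunction rather than from multiplicativity of $\pi^{\sharp}$ together with (ii), and you offer the self-adjoint-idempotent characterization for (v).
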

\begin{proof}
  The identity $\pi_{\flat}\pi^{\sharp}=\id_{\Rr^{\Omega'}}$ follows directly
  from the definitions of $\pi^{\sharp}$ and $\pi_{\flat}$.  Applying the
  $\flat$ functor to the factorization
  $\pi_{\Omega}^{\bullet}=\pi_{\Omega'}^{\bullet}\pi$ yields
  \[
    \E = \pi_{\Omega\flat}^{\bullet} = \pi_{\Omega'\flat}^{\bullet}\pi_{\flat}
    = \E'\pi_{\flat}.
  \]
  Thus $\pi_{\flat}$ preserves expectation; composing with $\pi^{\sharp}$
  and using~\eqref{i:section} shows that $\pi^{\sharp}$ does as well. The
  assertion in~\eqref{i:proj_flat} follows immediately from the $\pi_{\flat}$
  version of the projection formula~\eqref{eq:proj_formula}:
  \begin{equation}
    \pi_{\flat}(\pi^{\sharp}ZX) = \frac{\pi_{*}(\pi^{\sharp} Z X
    \pr)}{\pi_*\pr} = \frac{Z\pi_{*}(X\pr)}{\pi_*\pr} = Z \pi_{\flat}X.
    \label{eq:flat_proj}
  \end{equation}
  Applying $\pi^{\bullet}_{\Omega'\flat}$ to both sides of this identity
  yields~\eqref{i:adjoint}:
  \begin{align*}
    \gv{Z, \pi_{\flat}X}_{\Omega'}
    &=\pi^{\bullet}_{\Omega'\flat}(Z\pi_{\flat}X)
    = \pi^{\bullet}_{\Omega'\flat}\pi_{\flat}(\pi^{\sharp}ZX) \\
    &= \pi^{\bullet}_{\Omega\flat}(\pi^{\sharp}ZX)
    = \langle \pi^{\sharp}Z,X\rangle_{\Omega}.
  \end{align*}
  To prove~\eqref{i:orth_proj}, first note that $\pi_{\flat}$ is surjective
  since by~\eqref{i:section} $\pi^{\sharp}$ is its right inverse. Thus, the
  range of $\E_{\pi} = \pi^{\sharp}\pi_{\flat}$ is $\cA_{\pi}$. For any $X \in
  \Rr^\Omega$ and $Y \in \cA_{\pi}$, $Y=\pi^{\sharp}Z$ for some $Z \in
  \Rr^{\Omega'}$.  By the adjunction just proved,
  \begin{align*}
    \langle X-\E_{\pi}X, Y \rangle_{\Omega}
    &= \langle X-\pi^{\sharp}\pi_{\flat}X, \pi^{\sharp}Z \rangle_{\Omega} \\
    &= \langle \pi_{\flat}X-\pi_{\flat}\pi^{\sharp}\pi_{\flat}X,
    Z \rangle_{\Omega'} \\
    &= \langle \pi_{\flat}X-\pi_{\flat}X,Z \rangle_{\Omega'} = 0,
  \end{align*}
  Therefore $X-\E_{\pi}X$ is orthogonal to $\cA_{\pi}$ for every $X$, and this
  proves $\E_{\pi}$ is the orthogonal projection onto $\cA_{\pi}$.  Finally,
  for any $Z,Z'\in\Rr^{\Omega'}$, since $\pi^{\sharp}$ is an algebra map,
  it follows from~\eqref{i:preserve-E} that
  \begin{align*}
    \langle \pi^{\sharp}Z, \pi^{\sharp}Z'\rangle_{\Omega} &=
    \E\pi^{\sharp}Z\pi^{\sharp}Z' = \E\pi^{\sharp}(ZZ') = \E'(ZZ') =
    \gv{Z,Z'}_{\Omega'}.
  \end{align*}
  This proves~\eqref{i:isometry}.
\end{proof}
\begin{remark}
The preceding theorem has a useful Markov-kernel interpretation.  For background
on Bayesian inversion and Markov kernels in categorical probability, the reader
can consult~\cite{cho-jacobs-disintegration, fritz-markov-kernels}.  The
deterministic Markov kernel given by a bundle $\pi \colon \gv{\Omega,\pr}
\to \gv{\Omega',\pr'}$, i.e.
\[
  D_{\pi}(\omega,\omega')=
  \begin{cases}
    1, & \pi(\omega)=\omega',\\
    0, & \pi(\omega) \neq \omega'
  \end{cases}
\]
has a canonical Bayesian inverse kernel from the base scheme to the total
scheme:
\[
  K_{\pi}(\omega',\omega)=
  \begin{cases}
    \pr(\omega)/\pr'(\omega'), & \pi(\omega)=\omega',\\
    0, & \text{otherwise}.
  \end{cases}
\]
For fixed $\omega'$, the entries of $K_{\pi}(\omega',-)$ sum to $1$, since
$\pi$ is probability-preserving.  Thus $K_{\pi}$ is a Markov kernel whose
conditional law on the fiber over $\omega'$ is exactly the fiber scheme of
$\pi$.  Its action on observables is
\[
  \sum_{\omega\in\Omega}K_{\pi}(\omega',\omega)X(\omega)
  =
  \frac{1}{\pr'(\omega')}
  \sum_{\omega\in\pi^{-1}(\omega')}X(\omega)\pr(\omega)
  =
  (\pi_{\flat}X)(\omega').
\]
Thus $\pi_{\flat}$ is the observable-side action of this Bayesian inverse.
The identity $\pi_{\flat}\pi^{\sharp}=\id$ says that base-level observables are
unchanged after lifting to the total scheme and averaging back over the
posterior fiber law.  The projection $\pi^{\sharp}\pi_{\flat}$ replaces a
random variable by its fiberwise posterior average, forgetting its variation
inside each fiber.
\end{remark}
\subsection{Bundles, Algebras and Partitions}
\label{ssec:bundles_algebras_partitions}

Any surjection $\pi$ from a scheme $\gv{\Omega, \pr}$ gives the bundle
$\pi \colon \Omega \to [\pi]$.  This association is compatible with function
composition: if $\pi=\pi''\pi'$ as surjections with domain $\gv{\Omega,\pr}$,
then the associated maps, equipped with their induced base distributions,
are bundles and satisfy the same factorization.  We denote by $\pi_{\equiv}$
the bundle associated with the quotient map of an equivalence relation $\equiv$
(or a partition) on $\Omega$.

The association $\pi \mapsto \cA_{\pi}$ depends only on the fiber partition
of $\pi$.  Up to relabeling the base scheme, it is actually one-to-one.  To
construct the bundle $\pi_{\cA}$ whose associated algebra is a given
$\Rr$-subalgebra $\cA$ of $\Rr^{\Omega}$, observe that $\cA$ will be
generated by the indicator functions of the fibers of $\pi_{\cA}$.  Thus,
two points belong to the same fiber of $\pi_{\cA}$ if and only if they are
indistinguishable by members of $\cA$.  Consequently, $\pi_{\cA}$ must be
the quotient bundle of the equivalence relation $\equiv_{\cA}$ defined by
\[
  \omega_1 \equiv_{\cA} \omega_2 \iff X(\omega_1) = X(\omega_2)\ \text{for
  all}\ X \in \cA.
\]
It remains to show that the associated algebra of $\pi_{\cA}$ is actually
$\cA$.  From the definition of $\equiv_{\cA}$, elements of $\cA$ are constant
on the $\equiv_{\cA}$-classes (also known as the \emph{atoms} of $\cA$), so
$\cA$ is contained in the associated algebra of $\pi_{\cA}$.  To establish the
reverse inclusion, it suffices to show that $\cA$ contains the indicator
functions of its atoms (\cite[Chapter~2]{nelson}).  Let $A$ be an atom of
$\cA$ and $\omega \in \Omega \setminus A$.  Then, by the definition of
$\equiv_{\cA}$, there is some $X \in \cA$ whose value at $\omega$ differs from
its common value $X_A$ on $A$.  Since $\cA$ contains the constants, the function
\[
  X_{\omega} := \frac{X - X(\omega)}{X_A-X(\omega)},
\]
which is $1$ on $A$ and $0$ at $\omega$, is in $\cA$.  As a result, $1_A$,
which is the product $\prod_{\omega \notin A} X_{\omega}$, with the empty
product interpreted as $1$, belongs to $\cA$ as well.  To summarize, the
associations
\[
 \pi \mapsto \cA_{\pi},\quad \cA \mapsto \equiv_{\cA},\quad \equiv \mapsto
\pi_{\equiv}
\]
are one-to-one correspondences among three types of objects: (1) bundles
with total scheme $\gv{\Omega,\pr}$, considered up to relabeling of the
base scheme, (2) $\Rr$-subalgebras of $\Rr^{\Omega}$, and (3) partitions
of $\Omega$ (equivalently, equivalence relations on $\Omega$).  The inverse
correspondence for any one of the three descriptions is obtained by composing
the other two.  We leave it to the reader to verify that these correspondences
are order-preserving in the following sense: $\cB$ is an $\Rr$-subalgebra
of $\cA$ if and only if $\equiv_{\cA}$
 refines $\equiv_{\cB}$ and $\equiv$ refines $\equiv'$ if and only if
 $\pi_{\equiv'}$ factors through $\pi_{\equiv}$.

 \begin{example}
   We record the simplest case of these correspondences.
   \label{ex:simplest_corr}
   Let $Z$ be a random function on $\Omega$. Two
   outcomes in $\Omega$ are $\equiv_{\Rr[Z]}$-equivalent if and only if they
   have the same image under $Z$ (and hence under any $X \in \Rr[Z]$). Thus,
   the bundle corresponding to the algebra $\Rr[Z]$ is $\Omega \to [Z]$.
 \end{example}

\section{Conditional Expectations}
\label{sec:cond_expect}
Let $\cA$ be an $\Rr$-subalgebra of random variables on $\gv{\Omega,\pr}$,
and let $\atA$ be the base scheme whose points are the atoms of $\cA$, with
probability inherited from $\gv{\Omega,\pr}$.  Thus
$\pi_{\cA}\colon \Omega \to \atA$ sends each outcome to the atom containing it.
For $X \in \Rr^{\Omega}$, the \emph{conditional expectation of $X$ relative to
$\cA$} is
\begin{equation}
  \E_{\cA}X = \sum \E(X|A) 1_A
    \label{eq:relexp}
\end{equation}
where the sum runs through the atoms $A$ of $\cA$.  Another common notation
for $\E_{\cA}X$ is $\E(X|\cA)$.  We also write $\E(X|X_1, \ldots, X_n)$ for
the conditional expectation of $X$ relative to the $\Rr$-algebra generated
by $X_1, \ldots, X_n$.  When an atom $A$ is used as an argument of a function on
$\atA$, it is being regarded as the corresponding base point.  With this
convention, the conditional average of $X$ on $A$ is
\[
  \E(X|A) = (\pi_{\cA\flat}X)(A)
  = (\pi_{\cA}^{\sharp}\pi_{\cA\flat}X)(\omega)\quad (\omega \in A).
\]
Hence, $\E_{\cA}$ is the operator $\pi_{\cA}^{\sharp}\pi_{\cA\flat}$, which, by
Theorem~\ref{th:flat-sharp}~\eqref{i:orth_proj}, is the orthogonal projection
of $\Rr^{\Omega}$ onto $\cA$.

The key properties of conditional expectations, recognized in various
sources (e.g.,~\cite[Chapter 2]{nelson},~\cite[\S2.3, Theorem]{int_and_prob},
and~\cite[Theorem 5.1]{repas}), are quite transparent from the perspective of
orthogonal projections.  We now demonstrate how to derive them in a functorial
way from Theorem~\ref{th:flat-sharp} without explicitly appealing to geometry.

\begin{proposition}
  Let $\cA$ be an $\Rr$-subalgebra of $\Rr^{\Omega}$. Then
  \begin{enumerate}[i.]
    \item $\E_{\cA}$ is $\cA$-linear and its restriction to $\cA$ is the
      identity map.\label{i:A-linear}
    \item $\E_{\cB}\E_{\cA} = \E_{\cB}$ for any $\Rr$-subalgebra $\cB$
      of $\cA$.\label{i:tower}
    \item $\E'_{\cA}\E_{\cA} = \E$ where $\E'_{\cA}$ is the expectation
      induced by the distribution on the base scheme of
      $\pi_{\cA}$.\label{i:rm}
  \end{enumerate}
  \label{p:rep}
\end{proposition}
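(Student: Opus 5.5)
The plan is to work throughout with the factorization $\E_{\cA}=\pi_{\cA}^{\sharp}\pi_{\cA\flat}$ and read each item off Theorem~\ref{th:flat-sharp} and the functoriality of $\sharp$ and $\flat$. No geometric (orthogonality) argument beyond what that theorem already packages will be needed.

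For~\eqref{i:A-linear}, take $Y\in\cA$. By the correspondence in Section~\ref{ssec:bundles_algebras_partitions}, $\cA$ is the associated algebra of $\pi_{\cA}$, so $Y=\pi_{\cA}^{\sharp}Z$ for some $Z\in\Rr^{\atA}$. Then the module property~\eqref{i:proj_flat} of Theorem~\ref{th:flat-sharp}, in the form~\eqref{eq:flat_proj}, gives
\[
  \E_{\cA}(YX)=\pi_{\cA}^{\sharp}\bigl(Z\,\pi_{\cA\flat}X\bigr)=(\pi_{\cA}^{\sharp}Z)(\pi_{\cA}^{\sharp}\pi_{\cA\flat}X)=Y\,\E_{\cA}X,
\]
where the middle step uses that $\pi_{\cA}^{\sharp}$ is an algebra map. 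This is $\cA$-linearity. For the restriction to $\cA$ being the identity, we have
\[
  \E_{\cA}Y=\pi_{\cA}^{\sharp}\pi_{\cA\flat}\pi_{\cA}^{\sharp}Z=\pi_{\cA}^{\sharp}Z=Y
\]
by~\eqref{i:section}. Alternatively, one can take $X=1$ in the linearity identity and note $\E_{\cA}1=1$.

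For~\eqref{i:tower}, I will use the order-preserving correspondence: since $\cB\subseteq\cA$, the partition $\equiv_{\cA}$ refines $\equiv_{\cB}$. Hence $\pi_{\cB}$ factors as $\rho\,\pi_{\cA}$ for a bundle $\rho\colon\atA\to\atB$. It is a bundle because both maps carry induced distributions, as noted in Section~\ref{ssec:bundles_algebras_partitions}. Functoriality then gives $\pi_{\cB}^{\sharp}=\pi_{\cA}^{\sharp}\rho^{\sharp}$ and $\pi_{\cB\flat}=\rho_{\flat}\pi_{\cA\flat}$. Therefore
\[
  \E_{\cB}\E_{\cA}=\pi_{\cA}^{\sharp}\rho^{\sharp}\rho_{\flat}\,\pi_{\cA\flat}\pi_{\cA}^{\sharp}\,\pi_{\cA\flat}=\pi_{\cA}^{\sharp}\rho^{\sharp}\rho_{\flat}\pi_{\cA\flat}=\pi_{\cB}^{\sharp}\pi_{\cB\flat}=\E_{\cB},
\]
using~\eqref{i:section} for $\pi_{\cA}$ in the middle. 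The main point needing care is justifying the factorization $\pi_{\cB}=\rho\pi_{\cA}$ with $\rho$ probability-preserving. I would check this directly: $\rho$ sends an atom of $\cA$ to the atom of $\cB$ containing it, and the preimage of an atom of $\cB$ is the disjoint union of the $\cA$-atoms inside it, whose probabilities add up correctly.

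For~\eqref{i:rm}, I read $\E'_{\cA}\E_{\cA}$ as the expectation on $\atA$ applied to $\pi_{\cA\flat}X$, i.e.\ the base-level version of $\E_{\cA}X$. By~\eqref{i:preserve-E}, $\pi_{\cA\flat}$ preserves expectation, so $\E'\pi_{\cA\flat}=\E$. Equivalently, $\E\,\E_{\cA}=\E\,\pi_{\cA}^{\sharp}\pi_{\cA\flat}=\E'\pi_{\cA\flat}=\E$, because $\pi_{\cA}^{\sharp}$ also preserves expectation. Either reading yields the law of total expectation.
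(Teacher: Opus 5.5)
Your proposal is correct and follows the paper's proof essentially step for step: (i) uses the module property of $\pi_{\cA\flat}$ and the algebra map $\pi_{\cA}^{\sharp}$, then sets $X=1$; (ii) uses the factorization $\pi_{\cB}=\rho\,\pi_{\cA}$ together with $\pi_{\cA\flat}\pi_{\cA}^{\sharp}=\id$; and (iii) uses that $\pi_{\cA\flat}$ preserves expectation. Your explicit check that $\rho$ is probability-preserving fills in a detail the paper leaves implicit, and you handle it correctly.
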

\begin{proof}
  We first prove~\eqref{i:A-linear} and~\eqref{i:rm}, and then~\eqref{i:tower}
  the tower property.  Let $\pi = \pi_{\cA} \colon \Omega \to \Omega_{\cA}$
  be the bundle corresponding to $\cA$.  Take any $X \in \Rr^{\Omega}$
  and $Y \in \cA$.  As an element of $\cA$, $Y = \pi^{\sharp}Z$ for
  some random variable $Z$ on $\Omega_{\cA}$.  Since $\pi^{\sharp}$ is
  an algebra map and $\pi_{\flat}$ is an $\Rr^{\Omega_\cA}$-module map
  (Theorem~\ref{th:flat-sharp}~\eqref{i:proj_flat}),
  \begin{align*}
    \E_{\cA}(YX) &= \pi^{\sharp}\pi_{\flat}(YX) =
    \pi^{\sharp}\pi_{\flat}(\pi^{\sharp}Z X) \\ &= \pi^{\sharp}(Z\pi_{\flat}
    X) = \pi^{\sharp}Z \pi^{\sharp}\pi_{\flat}X
  = Y\E_{\cA}X.
  \end{align*}
  The $\cA$-linearity of $\E_{\cA}$ follows, and by taking $X$ to be
  $1_\Omega$, we see that $\E_{\cA}$ acts as the identity map on $\cA$.

  To make sense of the composition $\E_{\cA}'\E_{\cA}$ in~\eqref{i:rm},
  random variables in $\cA$ are identified with those on the base scheme of
  $\pi$ via $\pi_{\flat}$.  By Theorem~\ref{th:flat-sharp}~\eqref{i:section}
  and~\eqref{i:preserve-E}, we have
\[
  \E'_{\cA}\E_{\cA} = \E'_{\cA}\pi_{\flat}\E_{\cA} = \E_{\cA}'
  \pi_{\flat}(\pi^{\sharp} \pi_{\flat}) = \E_{\cA}' (\pi_{\flat}\pi^{\sharp})
  \pi_{\flat} = \E'_{\cA}\pi_{\flat} = \E.
\]
Finally, when $\cB$ is an $\Rr$-subalgebra of $\cA$, the atoms of
$\cB$ are disjoint unions of atoms of $\cA$, so $\pi_{\cB}$ factors as
$\pi_{\cB\cA}\pi_{\cA}$, where $\pi_{\cB\cA}$ is the bundle sending
an atom of $\cA$ to the atom of $\cB$ containing it.  Thus the induced maps
fit into the diagram
\[
\xymatrix@C=5.2em@R=3.2em{
  \Rr^\Omega
    \ar[r]^-{\pi_{\cA\flat}}
    \ar[d]_{\pi_{\cA\flat}}
  &
  \Rr^{\Omega_{\cA}}
    \ar[r]^-{\pi_{\cB\cA\flat}}
  &
  \Rr^{\Omega_{\cB}}
  \ar[d]^{\pi_{\cB}^{\sharp}}
  \\
  \Rr^{\Omega_{\cA}}
    \ar[r]_-{\pi_{\cA}^{\sharp}}
  &
  \Rr^{\Omega}
    \ar[r]_-{\E_{\cB}}
    \ar[u]_-{\pi_{\cA\flat}}
  &
  \Rr^{\Omega}.
}
\]
By Theorem~\ref{th:flat-sharp}~\eqref{i:section},
$\pi_{\cA\flat}\pi_{\cA}^{\sharp} =\id_{\Rr^{\Omega_{\cA}}}$, so the left
square commutes and since $\E_{\cB} = \pi_{\cB}^{\sharp}\pi_{\cB\flat}
= \pi_{\cB}^{\sharp}\pi_{\cB\cA\flat}\pi_{\cA\flat}$ the right square
commutes as well.  Therefore, the outer rectangle commutes, and that means
$\E_{\cB}\E_{\cA} = \E_{\cB}$.
\end{proof}
Notions defined via expectation admit relativized versions by replacing
expectation by conditional expectation.  For example, the \emph{covariance
of $X$ and $Y$ relative to an $\Rr$-algebra $\cA$} of random variables is
\[
  \cov_{\cA}(X,Y)
    := \E_{\cA}\bigl((X-\E_{\cA}X)(Y-\E_{\cA}Y)\bigr).
\]
Consequently, results expressed in terms of expectation, such as H\"older's
inequality, Jensen's inequality, and Chebyshev's inequality, all have
relativized versions; see~\cite[Chapter 2]{nelson} for details.
The expectation $\E$ on $\Rr^{\Omega}$ is just the conditional expectation
$\E_{\Rr}$.  Therefore, any relativized notion reduces to the standard one
when the algebra is $\Rr$.

\section{Orthogonality} \label{sec:orth}

To demonstrate the strength of Proposition~\ref{p:rep}, we derive several
standard results from elementary probability and statistics.  While the
orthogonality arguments underlying these proofs are well-known to experts,
we hope this perspective proves illuminating for readers who are less familiar
with it.

First, the \emph{law of total expectation} follows as a special case of the
identity $\E = \E_{\cA}'\E_{\cA}$ in Proposition~\ref{p:rep}.  By setting
$\cA = \Rr[X]$ to be the $\Rr$-algebra generated by a random variable $X$,
we obtain for any $Y \in \Rr^{\Omega}$:
\begin{equation}
  \begin{split}
    \E{Y} &= \E_{\Rr[X]}'\E_{\Rr[X]}Y = \E'_{\Rr[X]} \sum_{x \in X(\Omega)}
    \E(Y|X=x)1_{X = x}\\ &= \sum_{x \in X(\Omega)}
    \E(Y|X=x)\pr_X(x) = \E\E(Y|X).
\end{split}
  \label{eq:lote}
\end{equation}
When $Y$ is a function of $X$, Equation~\eqref{eq:lote} yields the \emph{law
of the unconscious statistician}:
\begin{equation}
  \E f(X) = \sum_{x \in X(\Omega)} \E(f(X)|X=x)\pr_X(x) = \sum_{x \in
  X(\Omega)} f(x)\pr_X(x).\label{eq:LOTUS}
\end{equation}
For a partition $\{A_1, \ldots, A_n\}$ of $\Omega$, take $X \in \Rr^{\Omega}$
to be the random function defined by $X(\omega) = i$ for $\omega\in A_i$.  The
law of total expectation then becomes
\begin{equation}
\E{Y} = \sum_{i=1}^n \E(Y|A_i)\Pr(A_i).\label{eq:s_lote}
\end{equation}
For an event $B \subseteq \Omega$ and a $\Rr$-subalgebra $\cA$ of
$\Rr^{\Omega}$, the \emph{conditional probability of $B$ relative to $\cA$}
is $\Pr_{\cA}(B):=\E_{\cA}1_B$.  Observe that if $A$
is an atom of $\cA$, then the value of $\Pr_{\cA}(B)$ at any $\omega \in A$ is
\[
  \E(1_B|A) = \frac{\E(1_B1_A)}{\E(1_A1_A)} =
  \frac{\Pr(B\cap A)}{\Pr(A)} = \Pr(B|A).
\]
Thus, substituting $Y=1_B$ into Equation~\eqref{eq:s_lote} yields the \emph{law
of total probability}:
\begin{equation}
  \Pr(B) = \E 1_B = \sum_{i=1}^n \E(1_B|A_i)\Pr(A_i) = \sum_{i=1}^n
  \Pr(B|A_i)\Pr(A_i).
  \label{eq:ltprob}
\end{equation}

Let $\cB \subseteq \cA$ be $\Rr$-subalgebras of $\Rr^{\Omega}$.  For random
variables $X_1, X_2 \in \Rr^{\Omega}$, consider the orthogonal decomposition
with respect to $\cA$:
\begin{align*}
  X_i-\E_{\cB}X_i &= (X_i-\E_{\cA}X_i) + (\E_{\cA}X_i - \E_{\cB}X_i), \qquad
  i \in \{1,2\}.
\end{align*}
Since $X_i-\E_{\cA}X_i \in \ker\E_{\cA}$ and $\E_{\cA}X_i - \E_{\cB}X_i \in
\cA$, the $\cA$-linearity of $\E_{\cA}$ gives
\begin{align*}
  \E_{\cA}(X_1 - \E_{\cB}X_1)(X_2-\E_{\cB}X_2)
  &=\E_{\cA}(X_1-\E_{\cA}X_1)(X_2-\E_{\cA}X_2)\\
  &+ (\E_{\cA}X_1 - \E_{\cB}X_1)(\E_{\cA}X_2 - \E_{\cB}X_2).
\end{align*}
Projecting further down to $\cB$ and using the fact $\E_{\cB}\E_{\cA} =
\E_{\cB}$, we get
\begin{align*}
  \E_{\cB}(X_1-\E_{\cB}X_1)(X_2-\E_{\cB}X_2) &
  = \E_{\cB}\E_{\cA}(X_1-\E_{\cA}X_1)(X_2-\E_{\cA}X_2) \\ &+
  \E_{\cB}(\E_{\cA}X_1 - \E_{\cB}X_1)(\E_{\cA}X_2 - \E_{\cB}X_2).
\end{align*}
In other words,
\begin{equation}
  \cov_{\cB}(X_1,X_2) = \E_{\cB}\cov_{\cA}(X_1,X_2) +
  \cov_{\cB}(\E_{\cA}X_1,\E_{\cA}X_2).  \label{eq:lotc}
\end{equation}
When $X_1=X_2=X$, Equation~\eqref{eq:lotc} becomes
\begin{equation}
  \var_{\cB}X = \E_{\cB}\var_{\cA}X + \var_{\cB} \E_{\cA}X.\label{eq:loftv}
\end{equation}
When $\cB=\Rr$ and $\cA=\Rr[Z]$, Equations~\eqref{eq:lotc}
and~\eqref{eq:loftv} become
\[
  \cov(X_1,X_2)=\E(\cov(X_1,X_2|Z))+\cov(\E(X_1|Z),\E(X_2|Z))
\]
and
\[
  \var X = \E(\var(X|Z)) + \var(\E(X|Z)).
\]
They are the usual \emph{law of total covariance} and the \emph{law of
variance}, respectively.

Setting $\cB = \Rr$ in Equation~\eqref{eq:loftv} and using the facts that
$\E\E_{\cA} = \E = \E'_{\cA}\E_{\cA}$ and $\E_{\cA}$ acts as the identity
on $\cA$ (Proposition~\ref{p:rep}), we get
\begin{equation}
  \begin{split}
    \var X &= \E \var_{\cA} X + \var \E_{\cA}X \\
    &= \E\E_{\cA}(X-\E_{\cA}X)^2
    + \E(\E_{\cA}X - \E\E_{\cA}X)^2 \\
    &= \E(X-\E_{\cA}X)^2
    + \E'_{\cA}\E_{\cA}(\E_{\cA}X - \E{X})^2 \\
    &= \E(X-\E_{\cA}X)^2
    + \E'_{\cA}(\E_{\cA}X - \E{X})^2.
  \end{split}
  \label{eq:covf}
\end{equation}
When $\cA$ is the algebra corresponding to the partition of the uniform scheme
$\Omega[nm]$ into $m$ groups of size $n$, Equation~\eqref{eq:covf} becomes
\begin{equation*}
  \frac{\sum_{\omega} (X(\omega)-\E{X})^2}{nm} = \frac{\sum_{\omega}
  (X(\omega) - \E(X|A_{\omega}))^2}{nm} + \frac{\sum_{A} (\E(X|A) -
  \E{X})^2}{m},
\end{equation*}
where $\omega$ runs through the elements of $\Omega$, $A$ runs through the
groups, and $A_{\omega}$ denotes the group containing $\omega$.  Clearing
denominators yields
\begin{equation*}
  \sum_{\omega} (X(\omega)-\E{X})^2
  = \sum_{\omega} (X(\omega)-\E(X|A_{\omega}))^2
  + n\sum_{A} (\E(X|A) - \E{X})^2.
\end{equation*}
This gives the \emph{components of variance formula}, which expresses the
total sum of squares (TSS) as the sum of the within-group sum of squares
(WSS) and the usual between-group sum of squares (BSS).

As an illustration of Nelson's quotation at the beginning of this article,
we give a short proof of a fibered version of the \emph{weak law of large
numbers}.  The argument is the usual Chebyshev proof, in the form suggested
by an exercise in Sternberg's notes on real analysis~\cite[Problem~2,
Problem Set~1]{ssreal}.  Let $X_1,X_2,\ldots$ be random variables on
a scheme $\gv{\Omega,\pr}$, and let $\cA$ be an $\Rr$-subalgebra of
$\Rr^{\Omega}$.  Suppose that the $X_i$ are pairwise uncorrelated relative
to $\cA$; that is, $\cov_{\cA}(X_i,X_j)=0$ for $i\ne j$.  Suppose also that
there is an element $K\in\cA$ such that $\var_{\cA}X_i\le K$ for all $i$, in
the pointwise order on $\cA$.  Then the conditional variance of the empirical
average $\bar{X}_n:=(1/n)\sum_{i=1}^n X_i$ has the following bound:
\begin{equation}
  \var_{\cA}\bar X_n
  = \frac{1}{n^2}\sum_{i,j=1}^n \cov_{\cA}(X_i,X_j)
  = \frac{1}{n^2}\sum_{i=1}^n \var_{\cA}X_i
  \le \frac{K}{n}.
  \label{eq:var-empr-mean}
\end{equation}

The relative version of Chebyshev's inequality~\cite[Chapter~2]{nelson}
converts this estimate into a conditional probability estimate: for
$\varepsilon > 0$,
\begin{equation}
  \Pr\nolimits_{\cA} \left\{|\bar X_n-\E_{\cA}\bar X_n|\ge \varepsilon\right\}
  \le \frac{\var_{\cA}\bar X_n}{\varepsilon^2} \le \frac{K}{n\varepsilon^2}.
  \label{eq:bundle-WLLN}
\end{equation}
In particular, if the $X_i$'s have a common conditional expectation relative
to $\cA$, equivalently if their orthogonal projections onto $\cA$ are all the
same element $\mu\in\cA$, then $\E_{\cA}\bar X_n=\mu$ and
\begin{equation}
  \Pr\nolimits_{\cA}\left\{|\bar X_n-\mu|\ge\varepsilon\right\} \le
  \frac{K}{n\varepsilon^2}.  \label{eq:WLLN-v2}
\end{equation}
Thus the conditional probability of deviation from the common conditional mean
tends to $0$ pointwise on each atom of $\cA$.  This is the relative finite-space
form of the weak law of large numbers.

Orthogonality has many other applications in probability theory and
statistics.  For example, it gives the usual geometry of least-squares
regression, where fitted values are orthogonal projections and the Pearson
correlation coefficient and coefficient of determination are cosine and
squared-projection quantities.  These projections are usually onto linear
subspaces rather than subalgebras, so they do not directly arise from bundles.
The interested reader may consult~\cite{hspm,rodgers-nicewander-correlation};
for a broader Hilbert-space treatment, see~\cite{hspm,HSM}.

\section{Fiber Products}
\label{sec:fprod}

Fiber products generalize independent products of schemes to arbitrary
bases.  Recall that the independent product of two schemes $\gv{\Omega,
\pr}$ and $\gv{\Omega',\pr'}$ is the scheme on $\Omega \times \Omega'$
with distribution
\[
  \gv{\omega,\omega'} \mapsto \pr(\omega)\pr'(\omega').
\]
Now fix two bundles over the same base scheme,
\[
  \gv{\Omega_1, \pr_1} \xrightarrow{\pi_1} \gv{\Omega_0,\pr_0}
\xleftarrow{\pi_2} \gv{\Omega_2,\pr_2}
\]
and define their \emph{fiber product} $\fprod$ as follows.  Its underlying
set is the set-theoretic fiber product
\begin{equation}
  \begin{split}
  \xymatrix{
    &\sfprod \ar[d]_{\theta_1} \ar[r]^(.6){\theta_2} \ar[rd]^{\pi} & \Omega_2
    \ar[d]^{\pi_2} \\ &\Omega_1 \ar[r]_{\pi_1} & \Omega_0
  }
  \end{split}
  \label{eq:fprod}
\end{equation}
where $\theta_i$ ($i=1,2)$ is the coordinate projection to $\Omega_i$,
and $\pi=\pi_1\theta_1=\pi_2\theta_2$.  The distribution $\pr_{\otimes}$ on
$\fprod$ is forced by requiring the fiber scheme of $\pi$ over each $\omega_0
\in\Omega_0$ to be the independent product of the corresponding fiber schemes
of $\pi_{1\omega_0}$ and $\pi_{2\omega_0}$.  If $\gv{\omega_1,\omega_2}
\in \Omega_1\times_{\Omega_0}\Omega_2$ lies over $\omega_0$, then the
fiberwise-independence requirement translates to
\[
  \frac{\pr_{\otimes}(\gv{\omega_1,\omega_2})}{\pr_0(\omega_0)}
  = \frac{\pr_1(\omega_1)}{\pr_0(\omega_0)}
  \frac{\pr_2(\omega_2)}{\pr_0(\omega_0)}.
\]
Equivalently,
\begin{equation}
  \frac{\pr_{\otimes}}{\pi^{\sharp}\pr_0} =
  \theta_1^{\sharp}\left(\frac{\pr_1}{\pi_1^{\sharp}\pr_0}\right)
  \theta_2^{\sharp}\left(\frac{\pr_2}{\pi_2^{\sharp}\pr_0}\right).
  \label{eq:fiberwise-ind}
\end{equation}
Since $\theta_1^\sharp\pi_1^\sharp =\theta_2^\sharp\pi_2^\sharp =
\pi^{\sharp}$, that means $\pr_{\otimes}$ must be defined as
\begin{equation}
  \pr_{\otimes} := \frac{(\theta_1^{\sharp}\pr_1)(\theta_2^{\sharp}\pr_2)
  }{\pi^{\sharp}\pr_0} \label{def:pr_fb}
\end{equation}
A straightforward computation shows that $\pr_{\otimes}$ thus defined is
indeed a distribution on $\sfprod$ making the diagram in~\eqref{eq:fprod}
commutative as a square of bundles.  The key step is to establish the
base-change identity, for which we give a conceptual explanation in
Section~\ref{sec:transfer-Beck_Chevalley}:
\begin{equation}
  \theta_{1*}\theta_2^{\sharp}=\pi_1^{\sharp}\pi_{2*}.
  \label{eq:base-change}
\end{equation}
Indeed, for $a_2\in\Rr^{\Omega_2}$ and $\omega_1\in\Omega_1$,
\[
\begin{split}
  (\theta_{1*}\theta_2^\sharp a_2)(\omega_1) &=
  \sum_{\substack{(\eta_1,\eta_2)\in
      \Omega_1\times_{\Omega_0}\Omega_2\\ \eta_1=\omega_1}} a_2(\eta_2) =
      \sum_{\eta_2:\,\pi_2(\eta_2)=\pi_1(\omega_1)} a_2(\eta_2) \\ &=
  (\pi_{2*}a_2)(\pi_1(\omega_1)) = (\pi_1^\sharp\pi_{2*}a_2)(\omega_1).
\end{split}
\]
Since the function $\pr_{\otimes}$ defined in~\eqref{def:pr_fb}
is clearly positive, it remains to check that it is normalized
and that $\theta_1$ is probability-preserving.  To do so, it
suffices to show that $\theta_{1*}\pr_{\otimes}=\pr_1$.  Using the
base-change identity~\eqref{eq:base-change} and the projection
formula~\eqref{eq:proj_formula} for $\theta_1$, we get
  \[
    \begin{split}
      \theta_{1*}\pr_{\otimes}
      &=\theta_{1*}\left(
        \theta_1^{\sharp}\left(\frac{\pr_1}{\pi_1^{\sharp}\pr_0}\right)
        \theta_2^{\sharp}\pr_2\right)\\
      &=\frac{\pr_1}{\pi_1^{\sharp}\pr_0}\,
        \theta_{1*}\theta_2^{\sharp}\pr_2
       =\frac{\pr_1}{\pi_1^{\sharp}\pr_0}\,
        \pi_1^{\sharp}\pi_{2*}\pr_2
       =\pr_1.
    \end{split}
  \]
  The same argument gives $\theta_{2*}\pr_{\otimes}=\pr_2$.  Hence
  both coordinate projections are bundles.  Consequently,
  $\pi=\pi_1\theta_1=\pi_2\theta_2$ is also a bundle.

For fixed $\Omega_0$, the evident symmetry
$\Omega_1\otimes_{\Omega_0}\Omega_2\cong \Omega_2\otimes_{\Omega_0}\Omega_1$,
the identity bundle $\Omega_0 \to \Omega_0$ as unit, and the associativity
proved in Corollary~\ref{cor:fiber-product-associative} below make the
category of bundles over $\Omega_0$ into a symmetric monoidal category.  In
particular, ordinary independence is the special case in which $\Omega_0 =
\bullet$, so the ordinary independent product is the fiber product in the
slice over the terminal scheme.

Using fiber product, we can show that the category of bundles does not admit a
categorical product.  Suppose, for the sake of contradiction, that it does,
and let $\tau_1,\tau_2\colon P\to\Omega T$ be the product of a uniform
scheme $\Omega T$ with itself.  Since $\Omega T\otimes_{\bullet}\Omega
T\cong\Omega(T^2)$ has coordinate projections to $\Omega T$, the product
property would give a unique bundle $h\colon \Omega(T^2)\to P$ with $\tau_i
h=\pi_i$ for $i=1,2$.  Since the coordinate pair $\gv{\pi_1,\pi_2}$ separates
points, $h$ is injective.  As a bundle, $h$ is also surjective, so $h$ must
be an isomorphism.  Thus $P$ is isomorphic to the uniform scheme $\Omega(T^2)$.

The schemes $[1_A]$ and $[1_B]$ in Example~\ref{ex:distr-sch} are both
$\Omega T$ for $T=\{0,1\}$, and $[1_A,1_B]$ has coordinate bundles to
them.  Applying the same argument to these coordinate bundles would make
$[1_A,1_B]$ isomorphic to $P$ and hence isomorphic to the uniform scheme on
$\{0,1\}^2$.  However, this is clearly impossible since the four outcomes in
$[1_A,1_B]$ have probabilities $1/3,1/6,1/6,1/3$, not $1/4$.

Although the fiber product of bundles is not the categorical product of the
category of bundles, the stack of two fiber products is still a fiber product:

\begin{proposition}
  The outer rectangle obtained by pasting two fiber products of bundles
  as shown
  \begin{equation}
    \begin{split}
  \xymatrix{
    \Omega_3 \otimes_{\Omega_2} (\Omega_2 \otimes_{\Omega_0}\Omega_1)
      \ar[r] \ar[d] & \Omega_2 \otimes_{\Omega_0} \Omega_1 \ar[r] \ar[d]
      & \Omega_1 \ar[d]^{\alpha} \\ \Omega_3 \ar[r]_{\gamma} & \Omega_2
      \ar[r]_{\beta} & \Omega_0
  }
\end{split}
 \end{equation}
 is isomorphic to the fiber product of the bundles $\alpha$ and $\delta =
 \beta\gamma$.
  \label{p:stack_fprod}
\end{proposition}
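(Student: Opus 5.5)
The plan is to compare the two schemes directly: first the underlying sets, then the distributions. Write $P := \Omega_2\otimes_{\Omega_0}\Omega_1$ with coordinate bundles $\theta_2\colon P\to\Omega_2$, $\theta_1\colon P\to\Omega_1$. Write $Q := \Omega_3\otimes_{\Omega_2}P$, built from $\gamma$ and $\theta_2$.

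\textbf{Step 1: the underlying sets.} On the level of sets, $Q$ consists of triples $\gv{\omega_3,\gv{\omega_2,\omega_1}}$ with $\gamma(\omega_3)=\omega_2$ and $\beta(\omega_2)=\alpha(\omega_1)$. The map
\[
  \Phi\colon \gv{\omega_3,\gv{\omega_2,\omega_1}}\mapsto\gv{\omega_3,\omega_1}
\]
lands in $\Omega_3\times_{\Omega_0}\Omega_1$, because $\delta(\omega_3)=\beta\gamma(\omega_3)=\beta(\omega_2)=\alpha(\omega_1)$. Its inverse is $\gv{\omega_3,\omega_1}\mapsto\gv{\omega_3,\gv{\gamma(\omega_3),\omega_1}}$, since $\omega_2$ is forced to equal $\gamma(\omega_3)$. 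This is the standard pasting lemma for set-theoretic pullbacks. Moreover, $\Phi$ commutes with the projections to $\Omega_3$ and $\Omega_1$: the composite $Q\to P\to\Omega_1$ becomes the second coordinate.

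\textbf{Step 2: the distributions.} By definition~\eqref{def:pr_fb}, the distribution on $P$ is
\[
  \pr_P(\omega_2,\omega_1)=\frac{\pr_2(\omega_2)\pr_1(\omega_1)}{\pr_0(\beta\omega_2)}.
\]
Applying~\eqref{def:pr_fb} again, the distribution on $Q$ at a triple is
\[
  \frac{\pr_3(\omega_3)\,\pr_P(\omega_2,\omega_1)}{\pr_2(\omega_2)}
  =\frac{\pr_3(\omega_3)\pr_1(\omega_1)}{\pr_0(\beta\omega_2)}
  =\frac{\pr_3(\omega_3)\pr_1(\omega_1)}{\pr_0(\delta\omega_3)}.
\]
The last expression is exactly the fiber-product distribution on $\Omega_3\otimes_{\Omega_0}\Omega_1$ at $\Phi$ of the triple. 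The cancellation of $\pr_2(\omega_2)$ is the heart of the argument.

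\textbf{Conclusion.} Steps 1 and 2 together show that $\Phi$ is a bijection preserving point probabilities, hence an isomorphism of schemes. It is compatible with all the maps in the diagram, so the outer rectangle is isomorphic to the fiber-product square of $\alpha$ and $\delta$.

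There is no serious obstacle; the main care is bookkeeping. One must be explicit about the ordering of factors: the statement writes $\Omega_2\otimes_{\Omega_0}\Omega_1$, and the right square has $\alpha$ on the right. One must also check that the composite $Q\to\Omega_3\to\Omega_0$ equals $\delta$. Optionally, the distribution computation can be phrased operator-theoretically, via $\theta^\sharp$ and~\eqref{eq:fiberwise-ind}: the fiber of the pasted rectangle over $\omega_0$ is a product of independent fiber schemes. The pointwise computation above is shorter, so that is the one I will use.
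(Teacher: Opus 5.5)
Your proposal is correct and follows essentially the same route as the paper: the pasting law for set-theoretic pullbacks gives the bijection $\gv{\omega_3,\omega_1}\mapsto\gv{\omega_3,\gv{\gamma(\omega_3),\omega_1}}$, which is the inverse of your $\Phi$. A pointwise computation then shows that the factor $\pr_2(\omega_2)$ cancels, so the point masses agree.
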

\begin{proof}
  Since set-theoretic fiber products are pullbacks, it follows from the pasting
  law of pullbacks (see~\cite[Lemma 5.8]{awodey-category-theory}) that the map
  \[
    \gv{\omega_3,\omega_1} \mapsto
    \gv{\omega_3,\gv{\gamma(\omega_3),\omega_1}}
  \]
  gives the isomorphism from the set-theoretic fiber $\Omega_3
  \times_{\Omega_0} \Omega_1$ to the diagram obtained from the outer rectangle
  by forgetting the distribution.  It remains to check that this map preserves
  the distributions.  Put
  \[
    \omega_2=\gamma(\omega_3),\qquad
    \omega_0=\alpha(\omega_1)=\beta(\omega_2)=\delta(\omega_3).
  \]
  The probability of $\gv{\omega_3,\omega_1}$ in $\Omega_3
  \otimes_{\Omega_0} \Omega_1$ is
  \[
    \frac{\pr_3(\omega_3)\pr_1(\omega_1)}{\pr_0(\omega_0)}
  \]
  The point $\gv{\omega_2,\omega_1}$ has probability
  \[
    \frac{\pr_2(\omega_2)\pr_1(\omega_1)}{\pr_0(\omega_0)}
  \]
  in $\Omega_2\otimes_{\Omega_0}\Omega_1$.  Hence the probability of
  the image $\gv{\omega_3,\gv{\omega_2,\omega_1}}$ in the fiber product
  $\Omega_3\otimes_{\Omega_2}(\Omega_2\otimes_{\Omega_0}\Omega_1)$ is
  \[
    \frac{\pr_3(\omega_3)}{\pr_2(\omega_2)}
    \left(\frac{\pr_2(\omega_2)\pr_1(\omega_1)}{\pr_0(\omega_0)}\right)
    =
    \frac{\pr_3(\omega_3)\pr_1(\omega_1)}{\pr_0(\omega_0)}.
  \]
  The bijection therefore preserves point masses, and hence is an isomorphism
  of schemes.
\end{proof}

\begin{corollary}
  Fiber products of bundles are associative.  More precisely, the rebracketing
  map
  \[
    \Omega_1\otimes_{\Gamma_0}(\Omega_2\otimes_{\Gamma_1}\Omega_3)
    \longrightarrow
    (\Omega_1\otimes_{\Gamma_0}\Omega_2)\otimes_{\Gamma_1}\Omega_3
  \]
  given by
  \[
    \gv{\omega_1,\gv{\omega_2,\omega_3}}\longmapsto
    \gv{\gv{\omega_1,\omega_2},\omega_3}
  \]
  is an isomorphism of schemes.
  \label{cor:fiber-product-associative}
\end{corollary}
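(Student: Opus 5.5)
The plan is to argue directly at the level of point masses, in the same way as the proof of Proposition~\ref{p:stack_fprod}. First I fix the data implicit in the statement. There are bundles $\pi_1\colon\Omega_1\to\Gamma_0$, $\rho_0\colon\Omega_2\to\Gamma_0$, $\rho_1\colon\Omega_2\to\Gamma_1$ and $\pi_3\colon\Omega_3\to\Gamma_1$. The inner product $\Omega_2\otimes_{\Gamma_1}\Omega_3$ is regarded as a scheme over $\Gamma_0$ through $\rho_0\theta_2$. Likewise $\Omega_1\otimes_{\Gamma_0}\Omega_2$ is regarded as a scheme over $\Gamma_1$ through $\rho_1\theta_2$. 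Both composites are bundles, since coordinate projections of fiber products are bundles and bundles compose.

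Step one is the set-level bijection. A point of the left side is a triple with $\pi_1(\omega_1)=\rho_0(\omega_2)$ and $\rho_1(\omega_2)=\pi_3(\omega_3)$. The right side is cut out by exactly the same two equations. So the rebracketing map is a bijection of underlying sets; this is the usual associativity of set-theoretic pullbacks.

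Step two is to compare distributions using the defining formula~\eqref{def:pr_fb}. Write $g_0=\pi_1(\omega_1)=\rho_0(\omega_2)$ and $g_1=\rho_1(\omega_2)=\pi_3(\omega_3)$, and let $q_0,q_1$ be the distributions on $\Gamma_0,\Gamma_1$. In $\Omega_2\otimes_{\Gamma_1}\Omega_3$ the point $\gv{\omega_2,\omega_3}$ has mass $\pr_2(\omega_2)\pr_3(\omega_3)/q_1(g_1)$. The point $\gv{\omega_2,\omega_3}$ lies over $g_0$ in $\Gamma_0$, so in the outer product the point $\gv{\omega_1,\gv{\omega_2,\omega_3}}$ has mass
\[
  \frac{\pr_1(\omega_1)\,\pr_2(\omega_2)\pr_3(\omega_3)}{q_0(g_0)\,q_1(g_1)}.
\]
The symmetric computation gives the same value for $\gv{\gv{\omega_1,\omega_2},\omega_3}$. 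Here one uses that $\gv{\omega_1,\omega_2}$ has mass $\pr_1\pr_2/q_0$ and lies over $g_1$. Hence the bijection preserves point masses and is an isomorphism of schemes.

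The only real obstacle is bookkeeping. One must check that the base points entering each denominator really are $g_0$ and $g_1$. That is, the structure map of each inner product to the other base must factor through the $\Omega_2$-coordinate, so that $q_0$ and $q_1$ each appear exactly once on both sides. A more conceptual alternative would be to apply Proposition~\ref{p:stack_fprod}, with $\gamma$ taken to be the projection $\Omega_2\otimes_{\Gamma_1}\Omega_3\to\Omega_2$ and $\beta=\rho_0$, together with the symmetry isomorphism. That route rewrites both sides as iterated products over $\Omega_2$, but the direct mass computation above is shorter and is what I would write.
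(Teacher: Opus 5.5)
Your proof is correct, but it takes a different route from the paper's. The paper never recomputes point masses for the corollary. It introduces the auxiliary scheme $\Theta=(\Omega_1\otimes_{\Gamma_0}\Omega_2)\otimes_{\Omega_2}(\Omega_2\otimes_{\Gamma_1}\Omega_3)$ and applies Proposition~\ref{p:stack_fprod} twice: once to the composable pair $\Omega_1\otimes_{\Gamma_0}\Omega_2\to\Omega_2\to\Gamma_1$ against $\Omega_3\to\Gamma_1$, and once, after the symmetry isomorphisms, to $\Omega_2\otimes_{\Gamma_1}\Omega_3\to\Omega_2\to\Gamma_0$ against $\Omega_1\to\Gamma_0$. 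This identifies both bracketings with $\Theta$, and the paper then checks that the composite identification is the rebracketing map. Your direct computation does in one step what the paper does with two pasting steps; the pasting lemma's own proof is exactly this kind of mass comparison. Your version has two advantages. It displays the manifestly symmetric mass $\pr_1(\omega_1)\pr_2(\omega_2)\pr_3(\omega_3)/\bigl(q_0(g_0)\,q_1(g_1)\bigr)$, and it verifies the stated map directly, with no symmetry isomorphisms to track. The paper's route is more structural: associativity follows formally from pasting plus symmetry, and both bracketings appear as the same gluing over $\Omega_2$, which is the viewpoint behind the later Markov-chain description. The bookkeeping you flag is the right thing to check, and it holds. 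Each inner product's structure map to the other base factors through the $\Omega_2$-coordinate, so $q_0$ and $q_1$ each enter exactly once on both sides.
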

\begin{proof}
  Taking fiber products yields the diagram
  \begin{equation*}
    \xymatrix{ & & &\Theta \ar[dl] \ar[dr] & & \\
      & &\Omega_1 \otimes_{\Gamma_0} \Omega_2 \ar[dl] \ar[dr] & &\Omega_2
      \otimes_{\Gamma_1} \Omega_3 \ar[dl] \ar[dr] & \\
      &\Omega_1 \ar[dr] & &\Omega_2 \ar[dl] \ar[dr] & &\Omega_3 \ar[dl] \\
      & &\Gamma_0 & &\Gamma_1 &
   }
  \end{equation*}
  where
  \[
    \Theta:=(\Omega_1\otimes_{\Gamma_0}\Omega_2)\otimes_{\Omega_2}
    (\Omega_2\otimes_{\Gamma_1}\Omega_3).
  \]
  Apply Proposition~\ref{p:stack_fprod} to the composable pair
  \[
    \Omega_1\otimes_{\Gamma_0}\Omega_2 \longrightarrow \Omega_2
    \longrightarrow \Gamma_1
  \]
  and the bundle $\Omega_3\to\Gamma_1$.  This identifies
  $\Theta$ with
  $(\Omega_1\otimes_{\Gamma_0}\Omega_2)\otimes_{\Gamma_1}\Omega_3$ by sending
  \[
    \gv{\gv{\omega_1,\omega_2},\gv{\omega_2,\omega_3}}
    \longmapsto \gv{\gv{\omega_1,\omega_2},\omega_3}.
  \]

  Similarly, after using the symmetry of the two fiber-product factors over
  $\Omega_2$ and the symmetry
  $\Omega_1\otimes_{\Gamma_0}\Omega_2\cong
  \Omega_2\otimes_{\Gamma_0}\Omega_1$, the same scheme $\Theta$ is
  \[
    (\Omega_2\otimes_{\Gamma_1}\Omega_3)\otimes_{\Omega_2}
      (\Omega_2\otimes_{\Gamma_0}\Omega_1).
  \]
  Applying Proposition~\ref{p:stack_fprod} to
  \[
    \Omega_2\otimes_{\Gamma_1}\Omega_3 \longrightarrow \Omega_2
    \longrightarrow \Gamma_0
  \]
  and the bundle $\Omega_1\to\Gamma_0$ identifies this scheme with
  $(\Omega_2\otimes_{\Gamma_1}\Omega_3)\otimes_{\Gamma_0}\Omega_1$.
  The symmetry over $\Gamma_0$ then identifies it with
  $\Omega_1\otimes_{\Gamma_0}(\Omega_2\otimes_{\Gamma_1}\Omega_3)$, carrying
  \[
    \gv{\gv{\omega_1,\omega_2},\gv{\omega_2,\omega_3}}
    \longmapsto \gv{\omega_1,\gv{\omega_2,\omega_3}}.
  \]
  Composing these identifications gives exactly the rebracketing map displayed
  in the statement.
\end{proof}

The probabilistic content of fiber products of bundles is conditional
independence.  Let $X,Y,Z$ be random functions on a common scheme.
The distribution schemes $[\gv{X,Z}]$ and $[\gv{Y,Z}]$ both map to $[Z]$.
The canonical map
\[
  [\gv{X,Y,Z}] \longrightarrow [\gv{X,Z}]\otimes_{[Z]}[\gv{Y,Z}]
\]
on underlying sets is given by the universal property of set-theoretic fiber
products; it sends
$\gv{x,y,z}$ to $\gv{\gv{x,z},\gv{y,z}}$ and is injective.  The following
observation is often the most economical way to read a fiber product of
probability schemes.

\begin{proposition}
  The random functions $X$ and $Y$ are conditionally independent given $Z$ if
  and only if the natural injection
  \[
    [\gv{X,Y,Z}] \longrightarrow [\gv{X,Z}]\otimes_{[Z]}[\gv{Y,Z}]
  \]
  is an isomorphism of schemes.
  \label{p:cond-ind-fb}
\end{proposition}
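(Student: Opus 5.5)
Let $\iota\colon [\gv{X,Y,Z}]\to[\gv{X,Z}]\otimes_{[Z]}[\gv{Y,Z}]$ be the natural injection, $\gv{x,y,z}\mapsto\gv{\gv{x,z},\gv{y,z}}$. The plan is to compare point masses on both sides and to read conditional independence off that comparison. By \eqref{def:pr_fb}, the fiber product distribution at $\gv{\gv{x,z},\gv{y,z}}$ is
\[
  \frac{\Pr(X=x,Z=z)\,\Pr(Y=y,Z=z)}{\Pr(Z=z)},
\]
and this is defined and strictly positive for every point of the set-theoretic fiber product. The set-theoretic fiber product consists of all pairs with $\Pr(X=x,Z=z)>0$ and $\Pr(Y=y,Z=z)>0$. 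The point $\gv{x,y,z}$ has mass $\Pr(X=x,Y=y,Z=z)$ in $[\gv{X,Y,Z}]$. Recall that conditional independence of $X$ and $Y$ given $Z$ means
\[
  \Pr(X=x,Y=y\mid Z=z)=\Pr(X=x\mid Z=z)\Pr(Y=y\mid Z=z)
\]
for all $x,y$ and all $z\in Z(\Omega)$. Multiplying through by $\Pr(Z=z)$, this is exactly the equality of the two masses above.

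For ($\Rightarrow$), assume conditional independence. For any point $\gv{\gv{x,z},\gv{y,z}}$ of the fiber product, the right-hand side of the identity is positive. So $\Pr(X=x,Y=y,Z=z)>0$, which gives $\gv{x,y,z}\in[\gv{X,Y,Z}]$. Hence $\iota$ is surjective, and therefore bijective since it is already injective. The identity also says $\iota$ preserves point masses, so $\iota$ is an isomorphism of schemes.

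For ($\Leftarrow$), assume $\iota$ is an isomorphism, i.e.\ a mass-preserving bijection. Then the identity holds for every triple in $[\gv{X,Y,Z}]$. It remains to check the triples $\gv{x,y,z}$ with $z\in Z(\Omega)$ that are not in $[\gv{X,Y,Z}]$. There the left-hand side is $0$. If both $\Pr(X=x,Z=z)$ and $\Pr(Y=y,Z=z)$ were positive, then $\gv{\gv{x,z},\gv{y,z}}$ would lie in the fiber product. Surjectivity of $\iota$ would then put $\gv{x,y,z}$ in $[\gv{X,Y,Z}]$, a contradiction. So one of the two factors vanishes, and the identity holds as $0=0$. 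Values $x\notin X(\Omega)$ or $y\notin Y(\Omega)$ give $0=0$ trivially.

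The main obstacle is the bookkeeping of zero-probability combinations. Schemes here have strictly positive distributions, so conditional independence, a statement about all $(x,y,z)$, must be matched with surjectivity of $\iota$ (for the zero cases) and with mass preservation (for the positive cases). Once that split is made, everything else is direct substitution into \eqref{def:pr_fb}.
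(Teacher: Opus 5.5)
Your proof is correct and takes essentially the same route as the paper. You identify the fiber-product mass $\Pr(X=x,Z=z)\Pr(Y=y,Z=z)/\Pr(Z=z)$ with $\Pr(Z=z)\Pr(X=x\mid Z=z)\Pr(Y=y\mid Z=z)$, use its positivity together with conditional independence to obtain surjectivity and mass preservation, and recover conditional independence from mass preservation in the converse direction. Your extra bookkeeping for triples outside the range of $\gv{X,Y,Z}$ (via surjectivity) only makes explicit the zero cases that the paper's proof leaves implicit.
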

\begin{proof}
  Fix a point $\gv{\gv{x,z},\gv{y,z}}$ of the fiber product; equivalently,
  $\Pr(X=x,Z=z)>0$ and $\Pr(Y=y,Z=z)>0$.  The probability assigned by the
  fiber product to
  $\gv{\gv{x,z},\gv{y,z}}$ is
  \begin{align*}
    \frac{\Pr(X=x,Z=z)\Pr(Y=y,Z=z)}{\Pr(Z=z)},
  \end{align*}
  i.e. $\Pr(Z=z)\Pr(X=x|Z=z)\Pr(Y=y|Z=z)$.  The joint probability of the
  corresponding event is
  \[
    \Pr(X=x,Y=y,Z=z)=\Pr(Z=z)\Pr(X=x,Y=y|Z=z).
  \]
  Thus, if the canonical map is an isomorphism of schemes, then the two
  distributions are equal, which means that
  \[
    \Pr(X=x|Z=z)\Pr(Y=y|Z=z) = \Pr(X=x,Y=y|Z=z).
  \]
  Therefore, $X$ and $Y$ are conditionally independent given $Z$. Conversely,
  assume conditional independence.  For every point $\gv{\gv{x,z},\gv{y,z}}$ in
  the underlying set of the fiber product $[\gv{X,Z}]\otimes_{[Z]}[\gv{Y,Z}]$,
  the product
  \[
    \Pr(Z=z)\Pr(X=x|Z=z)\Pr(Y=y|Z=z)
  \]
  is positive and, by conditional independence, is equal to
  $\Pr(X=x,Y=y,Z=z)$.  Hence $\gv{x,y,z}$ lies in the range of
  $\gv{X,Y,Z}$.  Every point of the target is therefore attained by the
  canonical injection, so the injection is surjective; the same equality of
  probabilities shows that it preserves the distributions.
\end{proof}

Iterated fiber products (well-defined up to isomorphism by associativity
of fiber product) therefore provide a compact way to encode repeated
conditional-independence gluings.  We close by applying this viewpoint to
finite Markov chains.  A sequence $X_1, \ldots, X_n$ ($n \ge 2$) of random
elements on a scheme $\gv{\Omega,\pr}$ is a (\emph{discrete-time}) \emph{Markov
chain} if it satisfies the \emph{Markov property}: for each $2 \le i \le n$
and each $\gv{x_1, \ldots, x_i}$ in the range of $\gv{X_1, \ldots, X_i}$,
\begin{equation*}
  \Pr(X_i=x_i | X_1=x_1, \ldots, X_{i-1}=x_{i-1})
  = \Pr(X_i=x_i | X_{i-1}=x_{i-1}).
\end{equation*}
Equivalently, $\Pr(X_1=x_1, \ldots, X_i=x_i)$ equals
\begin{equation*}
  \Pr(X_1=x_1,\ldots, X_{i-1}=x_{i-1})
  \frac{\Pr(X_{i-1}=x_{i-1}, X_i=x_i)}
       {\Pr(X_{i-1}=x_{i-1})}.
\end{equation*}
By Proposition~\ref{p:cond-ind-fb}, this is equivalent to the assertion that
for each $2 \le i \le n$,
\begin{equation*}
  [\gv{X_1,\ldots,X_i}]
  \cong
  [\gv{X_1,\ldots,X_{i-1}}]\otimes_{[X_{i-1}]}[\gv{X_{i-1},X_i}].
\end{equation*}
Thus, the sequence $X_1, \ldots, X_n$ is a discrete-time Markov chain if
and only if the joint distribution scheme of the chain is isomorphic to the
iterated fiber product of its adjacent-pair schemes:
\begin{equation}
  [\gv{X_1,\ldots,X_n}]
  \cong
  [\gv{X_1,X_2}]\otimes_{[X_2]} \cdots
  \otimes_{[X_{n-1}]}[\gv{X_{n-1},X_n}].
  \label{eq:fb_MC}
\end{equation}
This fiber-product formulation separates a finite Markov model into two
ingredients.  The schemes $[\gv{X_i,X_{i+1}}]$ record the adjacent-pair
distributions, while the iterated fiber product records the rule for gluing
these adjacent pairs by conditional independence over the shared one-variable
schemes $[X_i]$.
It also shows immediately that the reverse sequence $X_n,\ldots,X_1$ is again a
discrete-time Markov chain, since the same iterated fiber product can be read
from right to left.

The framework presented here carries standard finite-probability calculations
functorially.  The point is not that the results above are new, but that the
bundle category makes their shared structure visible.  The constructions
above give a finite categorical dictionary for several standard operations
in probability.  Quotients of schemes, finite algebras of random variables,
and partitions describe the same data; pullback and fiberwise averaging
give conditional expectation; orthogonal projection explains the variance
identities; and fiber products encode the conditional-independence gluings
used in Markov-chain models.

\appendix
\section{Transfer as Trace and Beck--Chevalley}
\label{sec:transfer-Beck_Chevalley}

This appendix records a more conceptual framework for the transfer maps
$\pi_*$ and the base-change identity~\eqref{eq:base-change}, using the
categorical trace formalism in~\cite{ponto-shulman-traces}.  This approach
does not add any probabilistic content, but it explains why the projection
formula and the base-change identity are instances of familiar categorical
compatibilities.

Let $\pi\colon \gv{\Omega,\pr}\to\gv{\Omega',\pr'}$ be a bundle, and write
\[
  R=\Rr^\Omega,\qquad R'=\Rr^{\Omega'}.
\]
The pullback $\pi^\sharp\colon R'\to R$ makes $R$ a finite projective
$R'$-module.  With $\otimes_{R'}$ as the monoidal product and $R'$ as the unit,
the category of finite projective $R'$-modules is symmetric monoidal.  The
dual of $R$ in this category is
\[
  R^\vee=\Hom_{R'}(R,R').
\]
The evaluation map is
\[
  \ev\colon R^\vee\otimes_{R'}R\to R',
  \qquad
  f\otimes Y\longmapsto f(Y).
\]
Since $R$ is finite projective over $R'$, choose a finite dual basis
$Y_i\in R$, $f_i\in R^\vee$, so that
\[
	  Y=\sum_i Y_i f_i(Y)
  \qquad\text{for all }Y\in R.
\]
The coevaluation map is
\[
  \coev\colon R'\to R\otimes_{R'}R^\vee, \qquad 1\longmapsto \sum_i
  Y_i\otimes f_i.
\]
This formula is independent of the chosen dual basis.  Indeed, since $R$ is
finite projective over $R'$, the canonical map
\[
  R \otimes_{R'}R^\vee\longrightarrow \operatorname{End}_{R'}(R), \qquad Y
  \otimes f\longmapsto (X\mapsto Yf(X)),
\]
is an isomorphism.  The element $\sum_i Y_i \otimes f_i$ maps to
$\operatorname{id}_R$, by the dual-basis identity above, and hence is
the canonical preimage of $\operatorname{id}_R$.

The categorical trace of an $R'$-linear map $L \colon R \to R$ is the composite
\[
  R' \xrightarrow{\coev} R\otimes_{R'}R^\vee \xrightarrow{L\otimes 1}
  R\otimes_{R'}R^\vee \xrightarrow{\tau} R^\vee\otimes_{R'}R \xrightarrow{\ev}
  R',
\]
where $\tau(Y\otimes f)=f \otimes Y$ is the twist.  Thus,
\begin{equation}
  \Tr_{R/R'}(L)=\sum_i f_i(LY_i)\in R'.
  \label{eq:trace_formula}
\end{equation}
For the transfer map, the relevant endomorphism is multiplication by a random
variable $X\in R$.  One should expect its relative trace to be fiberwise
summation: over a fixed point $\omega'\in\Omega'$, the $R'$-module $R$ restricts
to the coordinate functions on the finite fiber $\pi^{-1}(\omega')$, and
multiplication by $X$ is diagonal there with diagonal entries
$X(\omega)$, $\omega\in\pi^{-1}(\omega')$.  Taking the trace over the base
therefore amounts to summing these diagonal entries separately on each fiber.
Choose $Y_{\omega} = 1_{\omega}$ and $f_{\omega}(Y) =
Y(\omega)1_{\pi(\omega)}$ ($\omega \in \Omega$) as the dual basis. Take $L$
in Equation~\eqref{eq:trace_formula} to be the multiplication-by-$X$ map, then
at $\omega' \in \Omega'$,
\[
\begin{split}
  \Tr_{R/R'}(m_X)(\omega') &= \sum_{\omega \in \Omega} f_{\omega}(m_X
  1_\omega) = \sum_{\omega \in \Omega} X(\omega)1_{\pi(\omega)}(\omega') \\
  &= \sum_{\omega \in \pi^{-1}(\omega')}X(\omega) = \pi_*X(\omega').
\end{split}
\]
This verifies that the transfer of $X$ is the relative trace of
$m_X$.  The Euler characteristic of the $R'$-module $R$, as defined
in~\cite[Definition~2.2]{ponto-shulman-traces}, is the relative trace of
$\id_R = m_{1_\Omega}$; that is the fiber-size counting function:
\[
  \omega' \mapsto \sum_{\omega \in \pi^{-1}(\omega')} 1 = |\pi^{-1}(\omega')|.
\]
Hence, in this context, the Euler characteristic of a finite-dimensional
vector space coincides with its dimension.

The projection formula~\eqref{eq:proj_formula} is Frobenius reciprocity in
this language.  For $Z\in R'$ and $X\in R$, taking the relative trace of
the $R'$-endomorphism $m_{(\pi^\sharp Z)X}=(\pi^\sharp Z)m_X$ yields
\begin{equation}
\begin{split}
  \pi_*((\pi^\sharp Z)X) &= \Tr_{R/R'}(m_{(\pi^\sharp Z)X}) =
  \Tr_{R/R'}(m_{\pi^\sharp Z}m_X)\\
  &= \ev\tau\left(m_{\pi^{\sharp}Z}m_X
  \otimes 1 \right)\coev\\
  &= \ev(1 \otimes m_{\pi^{\sharp}Z})\tau(m_X \otimes
  1)\coev \\
  &= Z\ev\tau(m_X \otimes 1)\coev = Z\,\Tr_{R/R'}(m_X) = Z\,\pi_*X.
\end{split}
\label{eq:relative_trace}
\end{equation}
Thus $\pi_*$ is $R'$-linear when $R$ is regarded as an $R'$-module through
$\pi^\sharp$.  This is the usual Frobenius reciprocity compatibility between
restriction $\pi^\sharp$ and transfer $\pi_*$.

The base-change identity~\eqref{eq:base-change} has the corresponding
Beck--Chevalley interpretation.  Consider a fiber-product square.  Since the
underlying set of the fiber product is $\Omega_1\times_{\Omega_0}\Omega_2$,
applying the pullback functor $\sharp$ gives the pushout of finite product
algebras
\[
  \begin{array}{c@{\qquad\qquad}c}
  \xymatrix{
    \Omega_1\otimes_{\Omega_0}\Omega_2
      \ar[r]^(.65){\theta_2} \ar[d]_{\theta_1}
      & \Omega_2 \ar[d]^{\pi_2}\\
    \Omega_1 \ar[r]_{\pi_1} & \Omega_0
  }
  &
  \xymatrix{
    R_{12} & R_2 \ar[l]_{\theta_2^\sharp}\\
    R_1 \ar[u]^{\theta_1^\sharp}
      & R_0 \ar[l]^{\pi_1^\sharp} \ar[u]_{\pi_2^\sharp}
  }
  \end{array}
\]
where $R_i=\Rr^{\Omega_i}$ and
\[
  R_{12}\cong R_1\otimes_{R_0}R_2,
  \qquad
  \theta_1^\sharp X_1\,\theta_2^\sharp X_2
  \longleftrightarrow X_1\otimes X_2.
\]
In the symmetric monoidal category of modules over a commutative ring,
finite projective modules are dualizable and extension of scalars is a strong
symmetric monoidal functor: it sends duals, coevaluation, and evaluation
to their scalar extensions; see~\cite[Example~6.3]{ponto-shulman-traces}.
Consequently, it preserves categorical traces.  For a finite projective
$R_0$-module $P$ and an $R_0$-linear endomorphism $L\colon P\to P$,
applying~\cite[Proposition~6.2]{ponto-shulman-traces} to the extension of
scalars along $R_0\to R_1$ gives
\[
  \Tr_{R_1\otimes_{R_0}P/R_1}(1\otimes L)
  =
  1\otimes\Tr_{P/R_0}(L).
\]
Taking $L=m_{X_2}$ and identifying $R_1\otimes_{R_0}R_0\cong R_1$ by
$X_1\otimes Z\mapsto X_1\pi_1^\sharp Z$, we obtain
\[
\begin{split}
  \theta_{1*}\theta_2^\sharp X_2 &= \Tr_{R_{12}/R_1}(m_{\theta_2^\sharp X_2})
  = \Tr_{R_{12}/R_1}(1 \otimes m_{X_2}) \\ &= 1\otimes\Tr_{R_2/R_0}(m_{X_2})
  = \pi_1^\sharp(\pi_{2*}X_2).
\end{split}
\]
Thus, $\theta_{1*}\theta_2^\sharp=\pi_1^\sharp\pi_{2*}$, which is precisely the
Beck--Chevalley, or Mackey base-change, condition.  It says that transferring
along $\pi_2$ and then restricting along $\pi_1$ is the same as first
restricting to the fiber product along $\theta_2$ and then transferring
along $\theta_1$.

In this sense the assignment $\Omega\longmapsto \Rr^\Omega$ has the structure
of a finite commutative Green functor~\cite{bouc-green-functors}, and in
particular a Mackey functor~\cite{thevenaz-webb-mackey}: pullback $\pi^\sharp$
is the contravariant ring-valued part, transfer $\pi_*$ is the covariant
module-valued part, Frobenius reciprocity is the projection formula, and
Beck--Chevalley is the base-change identity for fiber products.

%

\section*{Declaration of Generative AI and AI-assisted technologies in the
writing process}
During the preparation of this work, the author used OpenAI's ChatGPT/Codex
to assist with language polishing, LaTeX editing, and exposition review.
After using this tool, the author reviewed and edited the content as needed
and takes full responsibility for the content of the publication.

\bibliographystyle{amsplain}
\bibliography{fprs.bib}


\end{document}